\newcommand{\bse}{{{e}}}
\newcommand{\bsl}{{{l}}}
\newcommand{\bsx}{{{x}}}
\newcommand{\bsX}{{{X}}}
\newcommand{\bsmu}{{{\mu}}}
\newcommand{\bsnu}{{{\nu}}}
\newcommand{\bbE}{{\mathbb{E}}}
\newcommand{\bbN}{{\mathbb{N}}}
\newcommand{\bbP}{{\mathbb{P}}}
\newcommand{\bbR}{{\mathbb{R}}}
\newcommand{\NN}{{\mathbb{N}}} 
\newcommand{\RR}{{\mathbb{R}}} 
\DeclareSymbolFont{bbold}{U}{bbold}{m}{n}
\DeclareSymbolFontAlphabet{\mathbbold}{bbold}
\newcommand{\calA}{{\mathcal{A}}}
\newcommand{\calC}{{\mathcal{C}}}
\newcommand{\calD}{{\mathcal{D}}}
\newcommand{\calE}{{\mathcal{E}}}
\newcommand{\calI}{{\mathcal{I}}}
\newcommand{\calL}{{\mathcal{L}}}
\newcommand{\calM}{{\mathcal{M}}}
\newcommand{\calP}{{\mathcal{P}}}
\newcommand{\red}[1]{{\color{black}{#1}}}
\newcommand{\blue}[1]{{\color{black}{#1}}}
\newtheorem{lemma}{Lemma}[section]
\newtheorem{proposition}[lemma]{Proposition}
\newtheorem{corollary}[lemma]{Corollary}
\newtheorem{theorem}[lemma]{Theorem}
\newtheorem{remark}[lemma]{Remark}
\newtheorem{algo}[lemma]{Algorithm}
\begin{document}

\bibliographystyle{plain}

\title{Stochastic methods for solving high-dimensional partial differential equations}

\author{Marie Billaud-Friess\footnotemark[1] \and Arthur Macherey\footnotemark[1] \footnotemark[3] \and Anthony Nouy\footnotemark[1] \and Cl\'ementine Prieur\footnotemark[2]}

\renewcommand{\thefootnote}{\fnsymbol{footnote}}
\footnotetext[1]{Centrale  Nantes, LMJL, UMR CNRS 6629, 1 rue de la No\"e, 44321 Nantes}
\footnotetext[2]{Univ. Grenoble Alpes, Inria, CNRS, Grenoble INP*, LJK, 38000 Grenoble, France}
\renewcommand{\thefootnote}{\fnsymbol{footnote}}
\footnotetext[3]{Corresponding author (arthur.macherey@ec-nantes.fr).}

\maketitle

\begin{abstract} We propose algorithms for solving high-dimensional Partial Differential Equations (PDEs) that combine a probabilistic interpretation of PDEs, through Feynman-Kac representation, with sparse interpolation. Monte-Carlo methods and time-integration schemes are used to estimate pointwise evaluations of the solution of a PDE. We use a sequential control variates algorithm, where control variates are constructed based on successive approximations of the solution of the PDE. Two different algorithms are proposed, combining in different ways the sequential control variates algorithm and adaptive sparse interpolation. Numerical examples will illustrate the behavior of these algorithms.
\end{abstract}




\section{Introduction}
\label{sec1}
We consider the solution of an elliptic partial differential equation
\begin{equation}
\begin{split}
\calA(u) & = g \quad \text{in} \quad \calD, \\
u & = f \quad \text{on} \quad \partial \calD,
\end{split}
\label{eq:ellipticPDE}
\end{equation}
where $u : \overline{\calD} \rightarrow \bbR$ is a 
real-valued function, and $\calD$ is an open bounded domain in $\bbR^d$. $\calA$ is an elliptic linear differential operator and $f : \partial \calD \to \bbR$, $g: \overline{\calD} \rightarrow \bbR$ are respectively the boundary condition and the source term of the PDE. \\

We are interested in approximating the solution of  \eqref{eq:ellipticPDE}  up to a given precision.
For high dimensional PDEs  ($d \gg 1$), this requires \blue{suitable approximation formats} such as sparse tensors \cite{BG04,SY10} or low-rank tensors 
\cite{Osel11,GKT13,Hack14,BSU16,Nou17}. 
Also, this requires algorithms that provide approximations in a given approximation format. Approximations are typically provided by Galerkin projections using variational formulations of PDEs. Another path consists in 
 using a probabilistic representation of the solution $u$ through Feynman-Kac formula, and Monte-Carlo methods to provide 
estimations of pointwise evaluations of $u$  (see e.g., \cite{G2016}). This allows to compute approximations in a given approximation format through classical interpolation or regression \cite{Beck17,WJJ17,Beck18}. In \cite{GM2004,GM2005}, the authors consider interpolations on fixed polynomial spaces and propose a sequential control variates method for improving the performance of Monte-Carlo estimation. 
In this paper, we propose algorithms that combine this variance reduction method 
with adaptive sparse interpolation \cite{Chkifa2013,Chkifa2014}.

The outline is as follows. In section \ref{sec2}, we recall the theoretical and numerical aspects associated to probabilistic tools for estimating the solution of \eqref{eq:ellipticPDE}. We also present the sequential control variates algorithm introduced in \cite{GM2004,GM2005}. In section \ref{sec3} we introduce sparse polynomial interpolation methods and present a classical adaptive algorithm. In section \ref{sec:4}, we present two algorithms combining the sequential control variates algorithm from section \ref{sec2} and adaptive sparse polynomial interpolation. Finally, numerical results are presented in section  \ref{sec:4}.

\section{Probabilistic tools for solving PDEs}
\label{sec2} 

We consider the problem \eqref{eq:ellipticPDE} with a linear partial differential operator defined by $\calA(u) = -\calL(u) + k u$, where 
 $k$ is a real valued function defined on $\overline \calD$, and where
\begin{equation}
\calL (u)(\bsx) = \dfrac{1}{2} \sum_{i,j=1}^d (\sigma(\bsx) \sigma(\bsx)^T)_{ij} \partial^2_{\bsx_i \bsx_j}u(\bsx) + \sum_{i=1}^d b_i(\bsx) \partial_{\bsx_i}u(\bsx) 
 \label{differentialoperator}
\end{equation}
 is the {\em infinitesimal generator}  associated to the $d$-dimensional diffusion process  $\bsX^{\bsx}$ solution of \blue{the stochastic differential equation}
\begin{equation}
d \bsX^{\bsx}_t = b(\bsX^{\bsx}_t) dt + \sigma(\bsX^{\bsx}_t) d W_t, \quad \bsX^{\bsx}_0 = \bsx \in \overline{\calD},
\label{eq:SDE}
\end{equation}
where $W$ is a $d$-dimensional Brownian motion and \blue{$b := (b_1, \ldots, b_d)^T : \RR^d  \to \RR^d$} and $\sigma : \RR^d \to \RR^{d\times d}$ stand for the drift and the diffusion respectively. 

\subsection{Pointwise evaluations of the solution}

The following theorem recalls the Feynman-Kac formula (see \cite[Theorem 2.4]{CM2015} \blue{or \cite[Theorem 2.4]{Friedman1975} and the references therein}) that provides a probabilistic representation of  $u(\bsx)$, the solution of \eqref{eq:ellipticPDE} evaluated at $\bsx \in \overline{\calD}$.  
\begin{theorem}[Feynman-Kac formula] 
Assume that
\begin{enumerate}
\item[$(H {1})$]  $\calD$ is an open connected bounded domain of $\bbR^d$, regular in the sense that, if  $\tau^{\bsx} = \inf \left\{ s > 0 ~ : ~ \bsX^{\bsx}_s \notin \calD \right\}$ is the first exit time of $\calD$ for the process $\bsX^{\bsx}$, we have
$$
\bbP(\tau^\bsx = 0) = 1, \quad \bsx \in \partial \calD,
$$
\vspace{-0.5cm}
\item[$(H {2})$] $b,\sigma$ are Lipschitz functions,
\item[$(H {3})$] $f$ is continuous on $\partial \calD$, $g$ and $k \geq 0$ are H\"older-continuous functions on $\overline{\calD}$, 
\item[$(H {4})$] (uniform ellipticity assumption) there exists $c > 0$ such that
$$
\sum_{i,j=1}^d \left(\sigma(\bsx) \sigma(\bsx)^T \right)_{ij} \xi_i \xi_j \geq c \sum_{i=1}^d \xi_i^2, \quad \xi \in \bbR^d, \; \bsx \in \overline{\calD}.
$$
\end{enumerate}
Then, there exists a unique solution of \eqref{eq:ellipticPDE} in $\calC \left( \overline{\calD} \right) \cap \calC^2 \left( \calD \right)$, which satisfies for all $\bsx \in \overline{\calD}$  
\begin{equation}
\begin{split}
u(\bsx) & = \mathbb{E} \left[ F(u,\bsX^{\bsx}) \right] \\
\end{split}
\label{eq:FK}
\end{equation} 
where 
\begin{equation*}
F(u,\bsX^{\blue{x}}) =  u(\bsX^{\bsx}_{\tau^{\bsx}}) \exp \left( - \int_0^{\tau^{\bsx}} k(\bsX^{\bsx}_t) dt \right) + \int_0^{\tau^{\bsx}}  \calA (u)(\bsX^{\bsx}_t) \exp \left( - \int_0^t k(\bsX^{\bsx}_s) ds \right) dt,
\end{equation*}
with $u(\bsX^{\bsx}_{\tau^{\bsx}}) = f(\bsX^{\bsx}_{\tau^{\bsx}})$ and $ \calA (u)(\bsX^{\bsx}_t) = g(\bsX^{\bsx}_t)$. 
\label{th:FK}
\end{theorem}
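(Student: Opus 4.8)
The plan is to treat the two assertions separately. The existence and uniqueness of a classical solution $u \in \calC(\overline{\calD}) \cap \calC^2(\calD)$ is a standard consequence of linear elliptic theory under $(H1)$--$(H4)$: the uniform ellipticity $(H4)$ together with the H\"older regularity of the coefficients $(H3)$ yields interior Schauder estimates and hence a $\calC^2$ solution in $\calD$, while the domain regularity $(H1)$ and the continuity of $f$ provide a continuous extension up to $\partial\calD$; uniqueness follows from the maximum principle, using $k \geq 0$. Since this is precisely the content of the cited references, I would invoke it and devote the work to establishing the probabilistic representation for such a $u$.

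For the representation, the central device is It\^o's formula applied to a discounted process. Writing $Z_t = \exp(-\int_0^t k(\bsX^{\bsx}_s)\,ds)$, so that $dZ_t = -k(\bsX^{\bsx}_t)Z_t\,dt$, and setting $Y_t = u(\bsX^{\bsx}_t)Z_t$, the It\^o product rule (the bracket term vanishing because $Z$ has finite variation) gives
\[
dY_t = Z_t\,[\calL(u) - k u](\bsX^{\bsx}_t)\,dt + Z_t\,\nabla u(\bsX^{\bsx}_t)^T \sigma(\bsX^{\bsx}_t)\,dW_t,
\]
where the drift term comes from It\^o applied to $u(\bsX^{\bsx}_t)$ using $d\langle \bsX^{\bsx,i},\bsX^{\bsx,j}\rangle_t = (\sigma\sigma^T)_{ij}(\bsX^{\bsx}_t)\,dt$. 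The equation $\calA(u) = -\calL(u) + ku = g$ reduces the drift exactly to $-Z_t\,g(\bsX^{\bsx}_t)$, so integrating in time and taking expectations should produce the two terms of $F(u,\bsX^{\bsx})$, provided the stochastic integral is a martingale with zero expectation and the terminal value $Y_{\tau^{\bsx}}$ is identified with $f(\bsX^{\bsx}_{\tau^{\bsx}})Z_{\tau^{\bsx}}$ through the boundary condition $u = f$ on $\partial\calD$.

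The delicate point is that $u$ is only $\calC^2$ on the open set $\calD$, not up to the boundary, so It\^o cannot be applied directly on $[0,\tau^{\bsx}]$. I would localize by choosing open sets $\calD_n$ with $\overline{\calD_n} \subset \calD$ and $\calD_n \uparrow \calD$, letting $\tau_n$ denote the exit time of $\bsX^{\bsx}$ from $\calD_n$, and applying It\^o on $[0,\tau_n]$. On the compact set $\overline{\calD_n}$ the integrand $Z_t\,\nabla u^T\sigma$ is bounded, so the stopped stochastic integral is a genuine martingale of zero expectation, which after taking expectations yields
\[
u(\bsx) = \mathbb{E}\big[Y_{\tau_n}\big] + \mathbb{E}\Big[\int_0^{\tau_n} Z_t\,g(\bsX^{\bsx}_t)\,dt\Big].
\]

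The main obstacle is then the passage to the limit $n \to \infty$. Using $(H1)$ one has $\tau_n \uparrow \tau^{\bsx}$ almost surely with $\bsX^{\bsx}_{\tau_n} \to \bsX^{\bsx}_{\tau^{\bsx}} \in \partial\calD$, while $(H4)$ guarantees $\mathbb{E}[\tau^{\bsx}] < \infty$ on the bounded domain $\calD$, which is what controls the time-integral term. Since $k \geq 0$ gives $0 < Z_t \leq 1$ and $u$ is uniformly continuous on $\overline{\calD}$, dominated convergence lets me pass to the limit in both terms: $\mathbb{E}[Y_{\tau_n}] \to \mathbb{E}[f(\bsX^{\bsx}_{\tau^{\bsx}})Z_{\tau^{\bsx}}]$ and the integral converges to $\mathbb{E}[\int_0^{\tau^{\bsx}} Z_t\,g(\bsX^{\bsx}_t)\,dt]$. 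Assembling the two limits gives $u(\bsx) = \mathbb{E}[F(u,\bsX^{\bsx})]$, as claimed.
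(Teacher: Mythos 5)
The paper does not actually prove Theorem~\ref{th:FK}: it recalls it as a classical result and delegates the proof to \cite[Theorem 2.4]{CM2015} and \cite{Friedman1975}. So the comparison is against the classical argument, and your sketch \emph{is} that argument: It\^o's formula applied to the discounted process $u(\bsX^{\bsx}_t)\exp(-\int_0^t k(\bsX^{\bsx}_s)\,ds)$, localization on subdomains $\calD_n$ compactly contained in $\calD$ because $u$ is only $\calC^2$ on the open set, optional stopping for the stopped stochastic integral with bounded integrand, and dominated convergence as $\tau_n \uparrow \tau^{\bsx}$. The outline is sound; the drift cancellation via $\calA(u) = -\calL(u)+ku = g$ and the domination bounds ($0 < Z_t \le 1$, $|Y_{\tau_n}| \le \|u\|_\infty$, $|\int_0^{\tau_n} Z_t g\,dt| \le \|g\|_\infty \tau^{\bsx}$) are all correct. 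Three points would need attention in a full write-up. First, your localization covers $\bsx \in \calD$ only; for $\bsx \in \partial\calD$ the representation reduces to $u(\bsx)=f(\bsx)$, and it is exactly $(H1)$, i.e.\ $\bbP(\tau^{\bsx}=0)=1$, that makes this hold --- that is where $(H1)$ enters the representation, whereas the convergence $\tau_n \uparrow \tau^{\bsx}$ and $\bsX^{\bsx}_{\tau_n} \to \bsX^{\bsx}_{\tau^{\bsx}}$ need only path continuity and a.s.\ finiteness of $\tau^{\bsx}$, not $(H1)$. Second, $\bbE[\tau^{\bsx}]<\infty$ is itself a lemma (standard under $(H4)$ with coefficients bounded on $\overline\calD$: construct $h$ with $\calL h \le -1$ and apply Dynkin's formula); you invoke it with the right hypotheses, but it should be proved or cited. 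Third, the existence-and-uniqueness half is understated by ``interior Schauder plus continuous extension'': obtaining a solution in $\calC(\overline\calD)\cap\calC^2(\calD)$ for merely continuous $f$ rests on Perron's method with barriers, and identifying the probabilistic regularity $(H1)$ with barrier regularity of boundary points is a nontrivial theorem; this is the part that genuinely must be taken from the references --- which is what the paper does for the entire statement.
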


Note that $F(u,X^x)$ in \eqref{eq:FK} only depends on the values of $u$ on $\partial D$ and $\calA(u)$ on $D$, which are the given data \blue{$f$ and $g$} respectively. 
A Monte-Carlo \blue{method} can then be used to estimate 
$u(x)$ using \eqref{eq:FK}, which relies on the simulation of independent samples of an approximation of the stochastic process $\bsX^\bsx$. This process is here approximated 
 by an Euler-Maruyama scheme. More precisely, letting \blue{$t_n = n\Delta t$}, $n\in \mathbb{N}$, $\bsX^{\bsx}$ is approximated by a piecewise constant process $\bsX^{\bsx, \Delta t}$, where $\bsX^{\bsx, \Delta t}_t = \bsX^{\bsx, \Delta t}_n$ for $ t \in [t_n,t_{n+1}[$ and  
\begin{equation}
\begin{split}
\bsX^{\bsx, \Delta t}_{n+1} & = \bsX^{\bsx, \Delta t}_n + \Delta t ~ b(\bsX^{\bsx, \Delta t}_n) + \sigma(\bsX^{\bsx, \Delta t}_n) ~ \Delta W_n, \\
\bsX^{\bsx, \Delta t}_0 & = \bsx.
\end{split}
\label{eq:Eulerscheme}
\end{equation}
Here $\Delta W_n = W_{n+1} -  W_n$ is an increment of the standard Brownian motion. For details on time-integration schemes, the reader can refer to \cite{KP2012}. Letting $\{ \bsX^{\bsx, \Delta t}(\omega_m) \}_{m=1}^M$ be independent samples of $\bsX^{\bsx, \Delta t}$, we obtain an estimation $u_{\Delta t,M}(\bsx)$ of $u(\bsx)$ defined as
\begin{equation}
\begin{split}
 u_{\Delta t,M}(\bsx) & := \dfrac{1}{M} \sum_{m=1}^M F \left( u,\bsX^{\bsx, \Delta t}(\omega_m) \right) \\
& = \dfrac{1}{M} \sum_{m=1}^M \bigg[ f(\bsX^{\bsx,\Delta t}_{\tau^{\bsx,\Delta t}}(\omega_m)) \exp \left( - \int_0^{\tau^{\bsx,\Delta t}} k(\bsX^{\bsx,\Delta t}_t(\omega_m)) dt \right) \\
 & + \int_0^{\tau^{\bsx,\Delta t}}  g(\bsX^{\bsx,\Delta t}_t(\omega_m)) \exp \left( - \int_0^t k({\bsX}^{\bsx,\Delta t}_s(\omega_m)) ds \right) dt \bigg]
\end{split}
\label{eq:approxFK}
\end{equation}
where $\tau^{\bsx,\Delta t}$ is the first exit time of $D$ for the process $\bsX^{\bsx,\Delta t}(\omega_m)$, given by 
$$
\tau^{\bsx,\Delta t}  = \inf \left\{ t > 0 ~ : ~ \bsX^{\bsx,\Delta t}_t \notin \calD \right\}  = \min \left\{ t_n > 0 ~ : ~ \bsX^{\bsx,\Delta t}_{t_n} \notin \calD \right\}.
$$
\begin{remark}
In practice, $f$ has to be defined over $\mathbb{R}^d$ and not only on the boundary $\partial \cal D$. Indeed, although $X^{\bsx}_{\tau^{\bsx}} \in \partial \cal D$ with probability one, $X^{\bsx,\Delta t}_{\tau^{\bsx,\Delta t}} \in \mathbb{R}^d \setminus \overline{\calD}$ with probability one.
 \end{remark}
The error can be decomposed in two terms \\
\begin{equation}
\begin{split}
u(\bsx) -  u_{\Delta t,M}(\bsx) = & \overbrace{u(\bsx) - \bbE \left[ F \left( u,\bsX^{\bsx,\Delta t} \right) \right]}^{\varepsilon_{\Delta t}} \\ 
& + \underbrace{\bbE \left[ F \left( u,\bsX^{\bsx,\Delta t} \right) \right] - \dfrac{1}{M} \sum_{m=1}^M F \left( u,X^{\bsx, \Delta t}(\omega_m) \right)}_{\varepsilon_{MC}},
\end{split}
\label{eq:error}
\end{equation}
where $\varepsilon_{\Delta t}$ is the time integration error and $\varepsilon_{MC}$ is the Monte-Carlo estimation error.
Before discussing the contribution of each of both terms to the error, let us introduce the following additional assumption, which 
ensures that $\calD$ does not have singular points\footnote{Note that together with {$(H {4})$} , assumption {$(H {5})$}  implies {\it $(H {1})$}  (see \cite[\S4.1]{GM2005} for details), so that the set of hypotheses $(H{1})$-$(H{5})$ could be reduced to $(H{2})$-$(H{5})$.}. \\

\noindent {\it $(H {5})$}  Each point of $\partial \calD$ satisfies the {\em exterior cone condition} which means that, for all $\bsx \in \partial \calD$, there exists a finite right circular cone $K$, with vertex $\bsx$, such that $\overline K \cap \overline \calD = \{ \bsx \}$. \\

Under assumptions $(H{1})$-$(H{5})$, it can be proven \cite[\S4.1]{GM2005} that the time integration error $\varepsilon_{\Delta t}$  converges to zero.  It can be improved to $O(\Delta t^{1/2})$ by adding differentiability assumptions on the boundary \cite{GMe2009}. The estimation error $\varepsilon_{MC}$  is a random variable with zero mean and standard deviation converging as $O ( M^{-1/2})$.  \pagebreak The computational complexity for computing \blue{a pointwise evaluation of} $u_{\Delta t,M}(\bsx)$ is in $O \left( M \Delta t^{-1} \right)$ \blue{in expectation for $\Delta t$ sufficiently small}\footnote{\blue{A realization of $X^{x,\Delta t}$ over the time interval $[0,\tau^{x,\Delta t}]$ can be computed in $O \left( \tau^{x,\Delta t}\Delta t^{-1} \right)$. Then, the complexity to evaluate $u_{\Delta t,M}(x)$ is in $O(\bbE(\tau^{x,\Delta t}) M \Delta t^{-1} )$ in expectation. Under  $(H{1})$-$(H{5})$, it is stated in the proof of \cite[Theorem 4.2]{GM2005} that  $\sup_{x} \bbE [\tau^{x, \Delta t}] \le C$ with $C$ independent of $\Delta t$ for $\Delta t$ sufficiently small.}}, so that the computational complexity for achieving a precision $\epsilon$ (root mean squared error) behaves as $O(\epsilon^{-4})$. This does not allow to obtain a very high accuracy in a reasonable computational  time. 
The convergence with $\Delta t$ can be improved to $O(\Delta t)$ by suitable boundary corrections \cite{GMe2009}, therefore yielding a convergence in $O(\epsilon^{-3}).$ To further improve the convergence, high-order integration schemes could be considered (see \cite{KP2012} for a survey). 
Also, variance reduction methods can be used to further improve the convergence, such as antithetic variables, importance sampling, control variates (see  \cite{G2016}). Multilevel Monte-Carlo \cite{Giles2012} can be considered as a variance reduction method using several control variates (associated with processes $X^{\bsx,\Delta t_k}$ using different time discretizations). Here, 
 we rely on the sequential control variates algorithm proposed in \cite{GM2004} and analyzed in \cite{GM2005}. This algorithm constructs a sequence of approximations of $u$. At each iteration of the algorithm, the current approximation is used as a control variate for the estimation of $u$ through 
 Feynman-Kac formula. 

\subsection{A sequential control variates algorithm}
\label{sec2.3}
Here we recall the sequential control variates algorithm introduced in \cite{GM2004} in a general interpolation framework. 
We let $V_\Lambda \subset \calC^2(\overline \calD)$ be an approximation space of finite dimension $\# \Lambda$ and 
 let $\calI_\Lambda : \mathbb{R}^\calD \to V_\Lambda$ be the interpolation operator associated with a unisolvent grid $\Gamma_\Lambda = \{ \bsx_\bsnu : \bsnu \in \Lambda \}$. We let $(\bsl_\bsnu)_{\bsnu \in \Lambda}$ denote the (unique) basis of $V_\Lambda$ that satisfies the interpolation property $\bsl_\bsnu(\bsx_\bsmu) = \delta_{\bsnu\bsmu}$ for all $\bsnu,\bsmu \in \Lambda$. The interpolation $\calI_\Lambda(w) = \sum_{\nu \in \Lambda} w(x_\nu) l_\nu(x)$ of  function $w$ is then the unique function in $V_\Lambda$ such that 
 $$
 \calI_\Lambda(w) (x_\nu) = w(x_\nu) , \quad \nu \in \Lambda.
 $$
 The following algorithm provides a sequence of approximations $(\tilde{u}^k)_{k \geq 1}$ of $u$ in $V_\Lambda$, which are defined 
by $ \tilde{u}^{k}=  \tilde{u}^{k-1} + \tilde{e}^{k}$, where $\tilde{e}^{k}$ is an approximation of $e^k$, solution of  
 \begin{align*}
 \begin{array}{rcll}
&\calA (e^k)(\bsx) =  g(\bsx)  - \calA  (\tilde u^{k-1})(\bsx), & \bsx \in \calD,\\  
&e^k(\bsx) = f(\bsx)- \tilde u^{k-1}(\bsx), &\bsx \in \partial \calD.
 \end{array}
\end{align*}
Note that $e^k$ admits a Feyman-Kac representation $e^k(x) = \mathbb{E}(F(e^k , X^{\bsx}))$, where $F(e^k , X^{\bsx})$ depends on the residuals 
$ g - \calA  (\tilde u^{k-1})$ on $\cal D$ and $f - \tilde u^{k-1}$ on $\partial \calD$. The approximation $\tilde e^k$ is then defined as the interpolation $\calI_\Lambda(e^k_{\Delta t,M})$ of the Monte-Carlo estimate $e^k_{\Delta t,M}(x)$ of $e^k_{\Delta t}(x) =  \mathbb{E}(F(e^k , X^{\bsx,\Delta t}))$ (using $M$ samples of $X^{x,\Delta t}$).


\begin{algo}\normalfont\label{algo:GM}{\bfseries (Sequential control variates algorithm)}
\begin{algorithmic}[1]\normalfont
\STATE Set $\tilde u^{0} = 0$, $k=1$ and $S = 0$. 
\WHILE{$k \leq K ~ \text{and} ~ S < n_s $} 
\STATE Compute $ e^k_{\Delta t, M}(\bsx_\bsnu)$ for $\bsx_\bsnu \in \Gamma_\Lambda$.
\STATE Compute $\tilde e^k = \calI_\Lambda(e^k_{\Delta t,M}) = \sum_{\bsnu \in \Lambda} e^k_{\Delta t,M}(\bsx_\bsnu) \bsl_\bsnu(\bsx) $. \label{seqvarcomputeerror}
\STATE Update $ \tilde{u}^{k}=  \tilde{u}^{k-1} + \tilde{e}^{k}$. 
\STATE If $ \| \tilde u^k - \tilde u^{k-1} \|_2 \leq \epsilon_{tol} \| \tilde u^{k-1} \|_2$  then $S = S +1$ else $S = 0$.
\STATE  Set $k = k+1$. 
\ENDWHILE
\end{algorithmic}
\end{algo}
For practical reasons, Algorithm \ref{algo:GM} is stopped using an heuristic error criterion based on stagnation. This criterion is satisfied when the desired tolerance $\epsilon_{tol}$ is reached for $n_s$ successive iterations (in practice we chose $n_s =5$). \\

Now let us provide some convergence results for Algorithm \ref{algo:GM}. To that goal, we introduce the time integration error at point $\bsx$ for a function $h$ 
\begin{equation}
e^{\Delta t}(h,\bsx) = \mathbb{E} [ F ( h, \bsX^{\Delta t, \bsx}  ) ] - \mathbb{E} [ F \left( h, \bsX^\bsx  \right) ].
\label{eq:timeintegrationerror}
\end{equation}
 Then the following theorem \cite[Theorem 3.1]{GM2005} gives a control of the error in expectation.
\begin{theorem}
Assuming {\it \blue{$(H {2})$}}-{\it $(H {5})$}, it holds
\begin{equation*}
\sup_{\bsnu \in \Lambda} \left| \mathbb{E} \left[ \tilde{u}^{n+1}(\bsx_\bsnu) - u( \bsx_\bsnu) \right] \right| \leqslant C(\Delta t,\Lambda)  \sup_{\bsnu \in \Lambda} \left| \mathbb{E} \left[ \tilde{u}^{n}(\bsx_\bsnu) - u(\bsx_\bsnu) \right] \right| + C_1(\Delta t,\Lambda)
\end{equation*}
with $ \displaystyle C(\Delta t,\Lambda) = \underset{\bsnu \in \Lambda}{\sup} \sum_{\bsmu \in \Lambda} \vert e^{\Delta t}(\bsl_\bsmu,\bsx_\bsnu) \vert$ and $C_1(\Delta t,\Lambda) = \sup_{\bsnu \in \Lambda} \left| e^{\Delta t}(u - \calI_\Lambda(u),\bsx_\bsnu) \right|$. \\
Moreover if $C(\Delta t,\Lambda) ~ < 1$, it holds 
\begin{equation}
\limsup_{n\to \infty} ~ \underset{\bsnu \in \Lambda}{\sup} \left| \mathbb{E} \left[ \tilde{u}^{n}(\bsx_\bsnu) - u(\bsx_\bsnu) \right] \right| \leqslant \dfrac{C_1(\Delta t,\Lambda)}{1 - C(\Delta t,\Lambda)}.
\label{eq:Linftypointwiseconvergence}
\end{equation}
\label{th:th31GM}
\end{theorem}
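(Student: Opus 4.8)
The plan is to derive an exact one-step relation for the errors at the grid points and then take suprema. Writing $a_n := \sup_{\bsnu \in \Lambda} |\mathbb{E}[\tilde u^n(\bsx_\bsnu) - u(\bsx_\bsnu)]|$, the goal is the affine recursion $a_{n+1} \le C(\Delta t, \Lambda)\, a_n + C_1(\Delta t, \Lambda)$, after which the $\limsup$ bound follows by unrolling. I would first record two facts about the Feynman--Kac functional. Since $\calA$ is linear, $F(\cdot, \bsX^\bsx)$ is linear in its first argument (the boundary term is linear in the trace and the bulk term is linear through $\calA$), hence so is the time-integration error $h \mapsto e^{\Delta t}(h, \bsx)$. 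Moreover, applying Theorem \ref{th:FK} to the PDE trivially solved by any $h \in \calC^2(\overline\calD)$ (with data $\calA(h)$ in $\calD$ and $h|_{\partial\calD}$ on the boundary) gives $\mathbb{E}[F(h, \bsX^\bsx)] = h(\bsx)$, so that $\mathbb{E}[F(h, \bsX^{\bsx,\Delta t})] = h(\bsx) + e^{\Delta t}(h, \bsx)$ by the definition \eqref{eq:timeintegrationerror}. Finally, by uniqueness for the residual problem defining the exact correction $e^{n+1}$, and because $\calA(u) = g$ in $\calD$ and $u = f$ on $\partial\calD$, one has $e^{n+1} = u - \tilde u^n$.

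Next I would condition on the samples drawn up to step $n$, i.e. on $\tilde u^n$. The Brownian paths used at step $n+1$ are independent of $\tilde u^n$ and the Monte-Carlo estimator is unbiased, so $\mathbb{E}[\tilde e^{n+1}(\bsx_\bsnu) \mid \tilde u^n] = e^{n+1}_{\Delta t}(\bsx_\bsnu)$ at every node (using linearity of $\calI_\Lambda$ and $\calI_\Lambda(w)(\bsx_\bsnu) = w(\bsx_\bsnu)$). Combining with the facts above, $\mathbb{E}[\tilde e^{n+1}(\bsx_\bsnu)\mid \tilde u^n] = (u - \tilde u^n)(\bsx_\bsnu) + e^{\Delta t}(u - \tilde u^n, \bsx_\bsnu)$. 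Writing $\tilde u^{n+1} - u = (\tilde u^n - u) + \tilde e^{n+1}$ and taking the conditional expectation makes the deterministic terms cancel exactly, leaving $\mathbb{E}[\tilde u^{n+1}(\bsx_\bsnu) - u(\bsx_\bsnu) \mid \tilde u^n] = e^{\Delta t}(u - \tilde u^n, \bsx_\bsnu)$; the tower property then removes the conditioning.

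To expose the recursion I would use $\tilde u^n \in V_\Lambda$, so $\tilde u^n = \calI_\Lambda(\tilde u^n)$ and hence $u - \tilde u^n = (u - \calI_\Lambda u) - \sum_{\bsmu \in \Lambda} (\tilde u^n(\bsx_\bsmu) - u(\bsx_\bsmu))\, \bsl_\bsmu$. Linearity of $e^{\Delta t}(\cdot, \bsx_\bsnu)$, the triangle inequality, and the supremum over $\bsnu$ then yield $a_{n+1} \le C_1(\Delta t, \Lambda) + C(\Delta t, \Lambda)\, a_n$ with the stated constants. When $C(\Delta t, \Lambda) < 1$ this affine map is a contraction, so iterating gives $a_n \le C(\Delta t, \Lambda)^n a_0 + C_1(\Delta t, \Lambda)\frac{1 - C(\Delta t, \Lambda)^n}{1 - C(\Delta t, \Lambda)}$, and letting $n \to \infty$ produces \eqref{eq:Linftypointwiseconvergence}.

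The main obstacle I anticipate is handling the randomness of $\tilde u^n$ cleanly: since each iterate depends on all previously drawn samples, the identity $e^{n+1} = u - \tilde u^n$ and the unbiasedness step must be stated conditionally on the past and only then combined via the tower property. A secondary point requiring care is justifying $\mathbb{E}[F(h, \bsX^\bsx)] = h(\bsx)$ for the relevant arguments (the basis functions $\bsl_\bsmu$ and the interpolation residual $u - \calI_\Lambda u$), which needs enough regularity to invoke Feynman--Kac; this is available here because $V_\Lambda \subset \calC^2(\overline\calD)$.
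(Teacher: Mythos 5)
Your proof is correct. Note that the paper itself does not prove this statement: it imports it verbatim from the cited reference (Gobet--Maire, \cite[Theorem 3.1]{GM2005}), so there is no in-paper proof to compare against. Your reconstruction follows essentially the same route as that reference: (i) linearity of $h \mapsto e^{\Delta t}(h,\bsx)$, (ii) the identification $e^{n+1} = u - \tilde u^n$ (conditionally on the past) together with the Feynman--Kac identity $\mathbb{E}[F(h,\bsX^\bsx)] = h(\bsx)$, (iii) unbiasedness of the Monte-Carlo step given $\tilde u^n$ plus the tower property, yielding $\mathbb{E}[\tilde u^{n+1}(\bsx_\bsnu) - u(\bsx_\bsnu)] = \mathbb{E}[e^{\Delta t}(u - \tilde u^n, \bsx_\bsnu)]$, and (iv) the expansion of $u - \tilde u^n$ along $u - \calI_\Lambda(u)$ and the basis $(\bsl_\bsmu)$, giving the affine recursion and then the geometric-series bound. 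Your closing caveats are the right ones: the fresh, independent samples at each iteration are what make the conditional unbiasedness argument work, and applying Feynman--Kac to $\bsl_\bsmu$ and $u - \calI_\Lambda(u)$ requires the data of the corresponding residual problems (in particular $\calA(\bsl_\bsmu)$) to be H\"older continuous, which holds here since the $\bsl_\bsmu$ are polynomials and $b$, $\sigma$, $k$, $g$ satisfy $(H2)$--$(H3)$.
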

The condition $C(\Delta t,\Lambda) < 1$ implies that in practice $\Delta t$  should be chosen sufficiently small  \cite[Theorem 4.2]{GM2005}. Under this condition, the error at interpolation points uniformly converges geometrically up 
 to a threshold term depending on time integration errors for interpolation functions $l_\nu$ and the interpolation error $u - \calI_\Lambda (u)$. \\

Theorem \ref{th:th31GM} provides a convergence \blue{result} at interpolation points. 
Below, we provide a corollary to this theorem that provides a convergence result in 
$L^{\blue{\infty}}({\calD})$. This result involves the Lebesgue constants in $L^{\blue{\infty}}$-norm associated to $\calI_\Lambda$, defined by
\begin{equation}
\blue{\calL_{\Lambda}} = \sup_{v \in \blue{\mathcal{C}^0(\overline \calD)}} \dfrac{\|\calI_\Lambda(v)\|_{\blue{\infty}}}{\|v\|_{\blue{\infty}}},
\end{equation}

and such that for any $v \in \blue{\calC^0(\overline \calD)}$, 
\begin{equation}
\| v - \calI_\Lambda(v) \|_{\blue{\infty}} \le ( 1 + \blue{\calL_\Lambda} ) \inf_{w \in V_\Lambda} \| v-w \|_{\blue{\infty}}.
\label{eq:interpolantprojeteorthogonal}
\end{equation}
Throughout this article, we adopt the convention that supremum exclude elements with norm $0$.  
\blue{We recall also that the $L^\infty$ Lebesgue constant can be expressed as $ \calL_{\Lambda}=\sup_{x \in \overline \calD} \sum_{\nu \in \Lambda} |\bsl_\bsnu(\bsx)|$.}
\begin{corollary}[Convergence in $L^{\blue{\infty}}$] ~ \\

Assuming {\it \blue{$(H {2})$}}-{\it $(H {5})$}, it holds
\begin{equation}
\limsup_{n\to \infty}  ~ \| \mathbb{E} \left[ \tilde{u}^{n} - u \right] \|_{\blue{\infty}}
\leqslant \dfrac{C_1(\Delta t,\Lambda)}{1 - C(\Delta t,\Lambda)} \blue{\calL_{\Lambda}}
  + \| u - \calI_\Lambda(u)\|_{\blue{\infty}}.
\label{eq:Linftyconvergence}
\end{equation}
\label{th:Linftyconvergence}
\end{corollary}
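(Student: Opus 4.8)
The plan is to leverage Theorem \ref{th:th31GM}, which already controls the error at the interpolation points, and to convert this pointwise control into an $L^\infty$ bound by exploiting the fact that the iterates live in $V_\Lambda$ together with the Lebesgue-constant identity recalled just above the statement.

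First I would observe that every iterate lies in $V_\Lambda$: indeed $\tilde u^0 = 0 \in V_\Lambda$ and each increment $\tilde e^k = \calI_\Lambda(e^k_{\Delta t,M})$ belongs to $V_\Lambda$ by construction, so $\tilde u^n = \sum_{k=1}^n \tilde e^k \in V_\Lambda$. Writing $\tilde u^n = \sum_{\bsnu\in\Lambda}\tilde u^n(\bsx_\bsnu)\bsl_\bsnu$ with (random) coefficients equal to its nodal values and taking expectations coefficientwise, $\mathbb{E}[\tilde u^n] = \sum_{\bsnu\in\Lambda}\mathbb{E}[\tilde u^n(\bsx_\bsnu)]\bsl_\bsnu$ is again an element of $V_\Lambda$. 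Since $u$ is deterministic, $\mathbb{E}[\tilde u^n - u] = \mathbb{E}[\tilde u^n] - u$.

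The key step is the splitting
$$\mathbb{E}[\tilde u^n] - u = \big(\mathbb{E}[\tilde u^n] - \calI_\Lambda(u)\big) + \big(\calI_\Lambda(u) - u\big).$$
The second summand contributes exactly $\|u - \calI_\Lambda(u)\|_\infty$, the last term of \eqref{eq:Linftyconvergence}. For the first summand I would note that $w := \mathbb{E}[\tilde u^n] - \calI_\Lambda(u)$ lies in $V_\Lambda$, hence is reproduced by the interpolation operator, $w = \calI_\Lambda(w) = \sum_{\bsnu\in\Lambda}w(\bsx_\bsnu)\bsl_\bsnu$. By the interpolation property $\calI_\Lambda(u)(\bsx_\bsnu)=u(\bsx_\bsnu)$, its nodal values are precisely $w(\bsx_\bsnu) = \mathbb{E}[\tilde u^n(\bsx_\bsnu)] - u(\bsx_\bsnu)$, i.e.\ exactly the quantities controlled by Theorem \ref{th:th31GM}. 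Bounding pointwise,
$$|w(\bsx)| \le \sup_{\bsnu\in\Lambda}\big|\mathbb{E}[\tilde u^n(\bsx_\bsnu)] - u(\bsx_\bsnu)\big|\sum_{\bsnu\in\Lambda}|\bsl_\bsnu(\bsx)|,$$
and taking the supremum over $\bsx\in\overline\calD$ turns the last factor into the Lebesgue constant through the identity $\calL_\Lambda = \sup_{\bsx\in\overline\calD}\sum_{\bsnu\in\Lambda}|\bsl_\bsnu(\bsx)|$, giving $\|w\|_\infty \le \calL_\Lambda \sup_{\bsnu\in\Lambda}|\mathbb{E}[\tilde u^n(\bsx_\bsnu)] - u(\bsx_\bsnu)|$.

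Finally I would combine the two summands by the triangle inequality, take $\limsup_{n\to\infty}$, and insert the bound of Theorem \ref{th:th31GM} on $\sup_{\bsnu}|\mathbb{E}[\tilde u^n(\bsx_\bsnu)-u(\bsx_\bsnu)]|$, which yields \eqref{eq:Linftyconvergence}; the interpolation-error term passes through unchanged since it is independent of $n$. The argument is essentially routine once the right decomposition is fixed; the main conceptual point — and the only step needing care — is the observation that $\mathbb{E}[\tilde u^n]\in V_\Lambda$ is interpolated exactly, so that its deviation from $\calI_\Lambda(u)$ is governed solely by the nodal errors and can be amplified by no more than $\calL_\Lambda$. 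I would also check that the relevant expectations are finite, so that passing $\mathbb{E}$ through the finite nodal sum is legitimate.
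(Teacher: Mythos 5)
Your proof is correct and follows essentially the same route as the paper: the same triangle-inequality splitting about $\calI_\Lambda(u)$, control of $\mathbb{E}[\tilde u^n]-\calI_\Lambda(u)$ through its nodal values (exactly the quantities bounded by Theorem \ref{th:th31GM}) amplified by at most $\calL_\Lambda$, then a $\limsup$. The only cosmetic difference is that you apply the identity $\calL_\Lambda=\sup_{x\in\overline\calD}\sum_{\nu\in\Lambda}|\bsl_\nu(x)|$ directly to the nodal expansion of the $V_\Lambda$-element $\mathbb{E}[\tilde u^n]-\calI_\Lambda(u)$, whereas the paper constructs an auxiliary continuous function $w$ with those nodal values and $\|w\|_\infty=\sup_{\nu\in\Lambda}|w(x_\nu)|$ and then uses the operator-norm definition of $\calL_\Lambda$; your variant is, if anything, slightly more direct.
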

\begin{proof}
By triangular inequality, we have
\begin{equation*}
\|\bbE \left[ \tilde u^n - u \right] \|_{\blue{\infty}} \leqslant \| \bbE \left[ \tilde u^n - \calI_\Lambda(u) \right] \|_{\blue{\infty}} + \| \calI_\Lambda(u) - u \|_{\blue{\infty}}.
\end{equation*}
\blue{We can build a continuous function $w$} such that $w(x_\nu) = \bbE \left[ \tilde u^n(x_\nu) - u(x_\nu) \right] $ for all $\nu\in\Lambda$, and such that $$\Vert w \Vert_\infty = \sup_{\nu\in \Lambda} \vert w(x_\nu) \vert =  \sup_{\bsnu \in \Lambda} \left| \mathbb{E} \left[ \tilde{u}^{n}(\bsx_\bsnu) - u(\bsx_\bsnu) \right] \right|.$$
We have then
\begin{align*}
\| \bbE \left[ \tilde u^n - \calI_\Lambda(u) \right] \|_{\blue{\infty}} = \|  \calI_\Lambda(w)  \|_{\blue{\infty}} \le \calL_\Lambda \Vert w \Vert_{\blue{\infty}}.
\end{align*}
The result follows from the definition of the function $w$ and Theorem \ref{th:th31GM}.
\end{proof}
\begin{remark}
\blue{
Since for bounded domains $\calD$, we have
\begin{equation*}
\| v \|_2 \le | \calD |^{1/2} \| v \|_\infty, 
\end{equation*}
for all $v$ in $\calC^0(\overline {\calD})$, where $|\calD|$ denotes the Lebesgue measure of $\calD$, we can deduce the convergence results in $L^2$ norm from those    in $L^\infty$ norm. 
}
\end{remark}

\section{Adaptive sparse interpolation} \label{sec3}

We here present sparse interpolation methods following \cite{Chkifa2013,Chkifa2014}. 

\subsection{ Sparse interpolation} \label{sec:3.1}


For $1\le i \le d $, we let $\{\varphi_{k}^{(i)}\}_{k \in \blue{\bbN_0}}$ be a univariate polynomial basis, where $\varphi^{(i)}_k(x_i)$ is a polynomial of degree $k$. For a multi-index $\nu = (\nu_1,\dots,\nu_d) \in \blue{\mathbb{N}_0^d}$, we introduce the multivariate polynomial 
$$\varphi_\bsnu(\bsx) = \prod_{i=1}^d \varphi_{\nu_i}^{(i)}(x_i).$$
For a subset  $\Lambda \subset \mathbb{N}^d$, we let $\calP_{\Lambda} = \mathrm{span} \{ \varphi_\bsnu : \nu \in \Lambda \}$.
A subset $\Lambda$ is said to be {\em downward closed} if 
$$
\forall \bsnu \in \Lambda, ~ \bsmu \le \bsnu \Rightarrow \bsmu \in \Lambda.
$$
If $\Lambda$ is downward closed, then the polynomial space $\calP_{\Lambda}$ does not depend on the choice of univariate polynomial bases and is such that 
$\calP_{\Lambda} = \mathrm{span} \{ x^\nu : \nu \in \Lambda \}$, with $x^\nu = x_1^{\nu_1} \hdots x_d^{\nu_d}$. \\
In the case where $\calD= \calD_1 \times \ldots \times \calD_d$, we can choose for $\{\varphi^{(i)}_k\}_{k \in \blue{\bbN_0}}$ an orthonormal basis in $L^2(\calD_i)$ (i.e. a rescaled and shifted Legendre basis). Then $\{\varphi_\nu\}_{\nu\in \blue{\mathbb{N}_0^d}}$ is an orthonormal basis of $L^2(\cal D).$ 
To define a set of points $\Gamma_\Lambda$ unisolvent for $\calP_{\Lambda}$, we can proceed as follows. For each dimension $1\le i\le d$, we introduce a sequence of points $\{z^{(i)}_k\}_{k \in \blue{\bbN_0}}$ in $\overline \calD_i$ such that for any $p\ge 0$, $\Gamma^{(i)}_{p} = \{z^{(i)}_k\}_{k= 0}^{p}$ is unisolvent for $\calP_p = \mathrm{span}\{\varphi^{(i)}_k : 0\le k\le p\}$, therefore defining an interpolation operator $\calI^{(i)}_{p}$. Then we let  $$\Gamma_\Lambda = \{ z_\nu = (z_{\nu_1}^{(1)}, \hdots, z_{\nu_d}^{(d)}) : \nu\in \Lambda \} \subset \overline{ \calD}.$$ 
 This construction is interesting for adaptive sparse algorithms since for an increasing sequence of subsets $\Lambda_n$, we obtain an increasing sequence of sets $\Gamma_{\Lambda_n}$, and the computation of the interpolation on $\calP_{\Lambda_n}$ only requires the evaluation of the function on the new set of points $\Gamma_{\Lambda_n} \setminus \Gamma_{\Lambda_{n-1}}.$  Also, with such a construction, we have the following property of the Lebesgue constant of $\calI_{\Lambda}$ in $L^{\blue{\infty}}$-norm. This result is \blue{directly taken from} \cite[Section 3]{Chkifa2014}.
\begin{proposition}
\label{prop:1}
If for each dimension $1\le i \le d$, the sequence of points $\{z_k^{(i)}\}_{k \in \blue{\bbN_0}}$ is such that the interpolation operator $\calI_p^{(i)}$ has a Lebesgue constant
$
\blue{\calL_p} \leq (p+1)^s 
$
for some $s>0$, then for any downward closed set $\Lambda$, the Lebesgue constant $\blue{\calL_\Lambda}$ satisfies
\begin{equation}
\blue{\calL_\Lambda} \leq \left( \# \Lambda \right)^{s+1}.
\end{equation}

\end{proposition}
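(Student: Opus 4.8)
The plan is to reduce the multivariate claim to the univariate hypothesis $\calL_p \le (p+1)^s$ by exploiting the tensor-product construction of $\Gamma_\Lambda$ together with the nestedness of the univariate grids, and then to convert univariate ``degree'' factors into powers of $\#\Lambda$ using the downward-closed structure. First I would introduce the univariate difference operators $\Delta^{(i)}_k = \calI^{(i)}_k - \calI^{(i)}_{k-1}$ (with $\calI^{(i)}_{-1} = 0$) and the tensorized operators $\Delta_\nu = \bigotimes_{i=1}^d \Delta^{(i)}_{\nu_i}$. Because the univariate grids are nested and $\Lambda$ is downward closed, the interpolation operator admits the telescoping (Smolyak) representation $\calI_\Lambda = \sum_{\nu \in \Lambda} \Delta_\nu$; this identity is what makes the reduction to one dimension possible, and I would verify it by checking both sides agree on $\Gamma_\Lambda$ and lie in $\calP_\Lambda$.

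I would then argue by induction on the dimension $d$. Isolating the last coordinate $\nu_d = m$ and setting, for each $m$, the slice $\Lambda_m = \{\nu' \in \bbN_0^{d-1} : (\nu',m) \in \Lambda\}$, downward closedness of $\Lambda$ forces the slices to be nested downward-closed sets $\Lambda_0 \supseteq \Lambda_1 \supseteq \cdots \supseteq \Lambda_M$. Regrouping the Smolyak sum by the value of the last coordinate gives $\calI_\Lambda = \sum_{m=0}^M \calI_{\Lambda_m} \otimes \Delta^{(d)}_m$, which expresses $\calI_\Lambda$ through the $(d-1)$-dimensional operators $\calI_{\Lambda_m}$, to which the inductive hypothesis applies, and the univariate increments in the new coordinate, controlled by $\calL_m \le (m+1)^s$.

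The engine of the estimate is a purely combinatorial inequality coming from the lower-set structure: since the slices are nested, $(m+1)\,\#\Lambda_m \le \sum_{m'=0}^m \#\Lambda_{m'} \le \#\Lambda$, equivalently $\prod_{i=1}^d (\nu_i+1) \le \#\Lambda$ for every $\nu \in \Lambda$ (the full box of indices below $\nu$ lies in $\Lambda$). This is exactly what trades a univariate factor $(m+1)^s$ for a factor $(\#\Lambda)^s$ and thereby produces the extra unit in the exponent. Concretely, applying the inductive hypothesis $\calL_{\Lambda_m} \le (\#\Lambda_m)^{s+1}$ in dimension $d-1$ and inserting $(m+1)^s \le (\#\Lambda/\#\Lambda_m)^s$, each term of the form $\calL_{\Lambda_m}\,(m+1)^s$ is controlled by $\#\Lambda_m\,(\#\Lambda)^s$; summing over $m$ and using $\sum_m \#\Lambda_m = \#\Lambda$ collapses to $(\#\Lambda)^{s+1}$. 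The base case $d=1$ is immediate, since there $\Lambda = \{0,\dots,p\}$ and $\calL_\Lambda = \calL_p \le (p+1)^s \le (\#\Lambda)^{s+1}$.

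The delicate point, and what I expect to be the main obstacle, is to carry out the summation over $m$ tightly enough that no spurious dimension-dependent constant appears, so that the final bound is exactly $(\#\Lambda)^{s+1}$ and not $C_d\,(\#\Lambda)^{s+1}$. A naive application of the triangle inequality to $\calI_\Lambda = \sum_{m} \calI_{\Lambda_m} \otimes \Delta^{(d)}_m$ together with the crude estimate $\|\Delta^{(d)}_m\| \le \calL_m + \calL_{m-1} \le 2(m+1)^s$ loses a factor that compounds to $2^{d-1}$ across the recursion. Recovering the clean constant requires keeping the cancellation in the telescoping sum rather than bounding each increment separately: a summation-by-parts in the last coordinate, writing $\Delta^{(d)}_m = \calI^{(d)}_m - \calI^{(d)}_{m-1}$ and using the nesting $\Lambda_{m+1} \subseteq \Lambda_m$, so that only genuine endpoint Lebesgue constants $\calL_m$ (not doubled increments) enter the bookkeeping. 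Once this sharp control of the increments is secured, the combinatorial inequality and the induction close the argument.
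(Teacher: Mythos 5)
Your skeleton is sound --- the Smolyak identity $\calI_\Lambda=\sum_{\nu\in\Lambda}\Delta_\nu$, the slicing $\calI_\Lambda=\sum_{m=0}^M\calI_{\Lambda_m}\otimes\Delta^{(d)}_m$, and the box inequality $(m+1)\,\#\Lambda_m\le\#\Lambda$ are all correct --- and you correctly locate the weak point, but the fix you propose does not close it, so the proof has a genuine gap. The estimate your induction needs, $\|\calI_{\Lambda_m}\otimes\Delta^{(d)}_m\|\le\calL_{\Lambda_m}\,(m+1)^s$, is unjustified: the hypothesis only gives $\|\calI^{(d)}_m\|\le(m+1)^s$, hence $\|\Delta^{(d)}_m\|\le\calL_m+\calL_{m-1}\le(m+1)^s+m^s$. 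Your summation-by-parts remedy merely relocates this doubling rather than removing it: writing $\calI_\Lambda=\calI_{\Lambda_M}\otimes\calI^{(d)}_M+\sum_{m=0}^{M-1}\bigl(\calI_{\Lambda_m}-\calI_{\Lambda_{m+1}}\bigr)\otimes\calI^{(d)}_m$, the new factors $\|\calI_{\Lambda_m}-\calI_{\Lambda_{m+1}}\|$ are differences of $(d-1)$-dimensional interpolation operators, and your inductive hypothesis controls Lebesgue constants, not differences of interpolation operators; the only estimate available is again the triangle inequality $\calL_{\Lambda_m}+\calL_{\Lambda_{m+1}}$, i.e.\ the same factor $2$, now attached to the first $d-1$ variables. (Bounding $\|\calI_{\Lambda_m}-\calI_{\Lambda_{m+1}}\|$ cleanly is as hard as the proposition itself: take $\Lambda_{m+1}=\emptyset$.) So what your argument actually proves is $\calL_\Lambda\le 2^{d-1}(\#\Lambda)^{s+1}$, not the stated bound.

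For comparison: the paper itself gives no proof (it cites Section 3 of Chkifa--Cohen--Schwab), and the cited argument closes the gap not by avoiding the doubled increments but by absorbing them into a telescoping product. One keeps the crude bound $\|\Delta_\nu\|\le\prod_{j=1}^d(\calL_{\nu_j}+\calL_{\nu_j-1})\le\prod_{j=1}^d\bigl((1+\nu_j)^s+\nu_j^s\bigr)$ and invokes the elementary inequality $(1+k)^s+k^s\le(1+k)^{s+1}-k^{s+1}$ (equivalent to $(1+k)^{s-1}\ge k^{s-1}$, hence valid for all $k$ exactly when $s\ge1$, with equality at $k=0$), so that $\calL_\Lambda\le\sum_{\nu\in\Lambda}\prod_{j=1}^d\bigl((1+\nu_j)^{s+1}-\nu_j^{s+1}\bigr)$. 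Then the lower-set inequality $\sum_{\nu\in\Lambda}\prod_{j=1}^d\bigl((1+\nu_j)^{p}-\nu_j^{p}\bigr)\le(\#\Lambda)^p$ for $p\ge1$ --- which can be proved by exactly the slicing-plus-nestedness induction you set up, using that $t\mapsto(t+1)^p-t^p$ is nondecreasing for $p\ge1$ --- gives the result with $p=s+1$. In other words, the room for the extra terms is found not by sharpening operator norms (your dead end) but in the slack of the exponent: $(1+k)^{s+1}-k^{s+1}$ already dominates the doubled increment $(1+k)^s+k^s$. Note finally that this absorption requires $s\ge1$; for $0<s<1$ the pointwise inequality fails for every $k\ge1$, and the standard argument then only yields $\calL_\Lambda\le(\#\Lambda)^2$ (via $\calL_p\le(p+1)^s\le p+1$), a caveat hidden by the proposition's phrasing ``for some $s>0$''.
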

Leja points or magic points  \cite{MadayMagic} are examples of sequences of points such that  
the interpolation operators $\calI_p^{(i)}$ have Lebesgue constants not growing too fast with $p$.
For a given $\Lambda$ with $\rho_i:=\max_{\nu \in \Lambda} \nu_i$, it is possible to construct univariate interpolation grids 
$\Gamma^{(i)}_{\rho_i}$ with better properties (e.g., Chebychev points), therefore resulting  in better properties for the associated interpolation 
operator $\calI_\Lambda$. However for Chebychev points, e.g., $\rho_i \leq \rho'_i$ does not ensure $\Gamma^{(i)}_{\rho_i} \subset \Gamma^{(i)}_{\rho'_i}$.  Thus with such univariate grids, an increasing sequence of sets $\Lambda_n$ will not be associated with an increasing sequence of sets $\Gamma_{\Lambda_n}$, and the evaluations of the function will not be completely recycled in adaptive algorithms. 
However, for some of the algorithms described in Section \ref{sec:4}, this is not an issue as evaluations can not be recycled anyway. \\
Note that for general domains $\calD$ which are not the product of intervals, the above constructions of grids $\Gamma_\Lambda$ are not viable since it may yield to grids not contained in the domain $\calD$. For such general domains, magic points obtained through greedy algorithms could be considered.


\subsection{Adaptive algorithm for sparse interpolation} \label{sec:32}

An adaptive sparse interpolation algorithm consists in constructing a sequence of approximations $(u_n)_{n\ge 1}$ associated with 
an increasing sequence of downward closed subsets  $(\Lambda_n)_{n \geq 1}$.  
According to \eqref{eq:interpolantprojeteorthogonal}, we have to construct a sequence such that the best approximation error  and the Lebesgue constant are such that
 $$
 \blue{\calL_{\Lambda_n}} \inf_{w \in \calP_{\Lambda_n}} \Vert u - w \Vert_{\blue{\infty}} \longrightarrow 0 \text{ as } n \to \infty
 $$ 
for obtaining a convergent algorithm. For example, if 
\begin{equation}
\inf_{w \in \calP_{\Lambda_n}}\Vert u - w \Vert_{\blue{\infty}} = O((\#\Lambda_n)^{-r})
\end{equation}
holds\footnote{see e.g. \cite{CohenDeVore} for conditions on $u$ ensuring such a behavior of the approximation error.} for some $r > 1$ and if \blue{$\calL_{\Lambda_n} =O(( \#\Lambda_n)^k)$} for $k<r$, then the error $\Vert u - u_n \Vert_{\blue{\infty}} = O(n^{-r'})$ tends to zero with an algebraic rate of convergence $r' = r-k>0$. Of course, the challenge is to propose a practical algorithm that constructs a good sequence of sets $\Lambda_m$. \\
We now present the adaptive sparse interpolation algorithm with bulk chasing procedure introduced in  \cite{Chkifa2013}. Let $\theta$ be a fixed bulk chasing parameter in $(0,1)$ and let $\calE_{\Lambda}(v)= \|P_{\Lambda}(v)\|^2_2$\blue{, where $P_\Lambda$ is the orthogonal projector over $\calP_\Lambda$} for  any subset $\Lambda \subset \blue{\NN_0^d}$.
\begin{algo}\normalfont
\label{algo:unperturbed}
{\bfseries (Adaptive  interpolation algorithm)}
\begin{algorithmic}[1]
\STATE Set $\Lambda_1 = \{\boldsymbol{0}_d\}$ and $n = 1$.
\WHILE{$n \leq N$ and $\varepsilon^{n-1} > \varepsilon$}
\STATE Compute $\calM_{\Lambda_n}$.
\STATE Set $\Lambda_n^\star=\Lambda_n \cup \calM_{\Lambda_n}$ and compute $\calI_{\Lambda_n^\star}(u)$.
\STATE Select $N_n \subset \calM_{\Lambda_n}$ the smallest such that $\calE_{N_n}(\calI_{\Lambda_n^\star}(u)) \ge  \theta \calE_{\calM_{\Lambda_n}}(\calI_{\Lambda_n^\star}(u))$
\STATE Update $\Lambda_{n+1} = \Lambda_n \cup N_n$.
\STATE Compute $u_{n+1}=\calI_{\Lambda_{n+1}}(u)$ (this step is not necessary in practice).
\STATE Compute {$\varepsilon^{n}$}.
\STATE Update  $n = n+1$.
\ENDWHILE
\end{algorithmic}
\end{algo}
At iteration $n$, Algorithm \ref{algo:unperturbed} selects 
a subset of multi-indices $N_n$ in the {\em reduced margin} of $\Lambda_n$ defined by
\begin{equation*}
\calM_{\Lambda_n} = \{ \bsnu \in \bbN^d \setminus \Lambda_n : \forall j \; \text{s.t.} \; \nu_j>0 ,\; \bsnu - \bse_j \in \Lambda_n \},
\end{equation*}
where $(\bse_j)_{k} = \delta_{kj}.$ The reduced margin is such that for any subset $S\subset \calM_{\Lambda_n}$, ${\Lambda_n} \cup S$ is downward closed. 
This ensures that the sequence $(\Lambda_n)_{n\ge 1}$ generated by the algorithm is an increasing sequence 
of downward closed sets.
Finally, Algorithm \ref{algo:unperturbed}  is stopped using a criterion based on   
\begin{equation*}
\varepsilon^n = \dfrac{\calE_{\calM_n}(\calI_{\Lambda_n^\star}(u))}{\calE_{\Lambda_n^\star}(\calI_{\Lambda_n^\star}(u))}. 
\end{equation*}

\section{Combining sparse adaptive interpolation with sequential control variates algorithm} \label{sec:4}

We present in this section two ways of combining Algorithm \ref{algo:GM} and Algorithm \ref{algo:unperturbed}.  First we introduce a perturbed version of Algorithm \ref{algo:unperturbed}  and then an adaptive version of Algorithm \ref{algo:GM}. \blue{At} the end of the section, numerical results will illustrate the behavior of the proposed algorithms.
\vspace{-0.5cm}

\subsection{Perturbed version of Algorithm \ref{algo:unperturbed}}

As we do not have access to exact evaluations of the solution $u$ of  \eqref{eq:ellipticPDE}, Algorithm \ref{algo:unperturbed} can not be used for interpolating $u$. So we introduce a perturbed version of this algorithm,  where the computation of the exact interpolant $\calI_{\Lambda}(u)$ is replaced by an approximation denoted $\tilde u_\Lambda$, which can be computed for example with Algorithm \ref{algo:GM} stopped for a given tolerance $\epsilon_{tol}$ or at step $k$. 
This brings the following algorithm.
\begin{algo}\normalfont
\label{algo:perturbed}
{\bfseries (Perturbed  adaptive sparse interpolation algorithm)}
\begin{algorithmic}[1]
\STATE Set $\Lambda_1 = \{\boldsymbol{0}_d\}$ and $n = 1$.
\WHILE{$n \leq N$ and $\tilde \varepsilon^{n-1} > \varepsilon$}
\STATE Compute $\calM_{\Lambda_n}$.
\STATE Set $\Lambda_n^\star=\Lambda_n \cup \calM_{\Lambda_n}$ and compute $\tilde u_{\Lambda_n^\star}$. \label{steptildeu}
\STATE Select $N_n $ as the smallest subset of $ \calM_{\Lambda_n}$ such that $\calE_{N_n}(\tilde u_{\Lambda_n^\star}) \ge  \theta \calE_{\calM_{\Lambda_n}}(\tilde u_{\Lambda_n^\star})$
\STATE Update $\Lambda_{n+1} = \Lambda_n \cup N_n$.
\STATE Compute $\tilde u_{\Lambda_{n+1}}$. \label{steptildeubis}
\STATE Compute  {$\tilde \varepsilon^{n}$}.
\STATE Update  $n = n+1$.
\ENDWHILE
\end{algorithmic}
\end{algo}

\subsection{Adaptive version of Algorithm \ref{algo:GM}} \label{sec:adaptivealgo1}

As a second algorithm, we consider 
the sequential control variates algorithm (Algorithm \ref{algo:GM}) where at step  \ref{seqvarcomputeerror}, an approximation 
$\tilde e^k$ of $e^k$ is obtained by applying the adaptive interpolation algorithm (Algorithm \ref{algo:perturbed}) to the function $e^{k}_{\Delta t,M}$, which uses Monte-Carlo estimations $e^{k}_{\Delta t,M}(x_\nu)$ of $e^k(x_\nu)$ at interpolation points. At each iteration, $\tilde e^k$ therefore belongs to a different approximation space $\calP_{\Lambda_k}.$  In the numerical section, we will call this algorithm {\bf adaptive Algorithm \ref{algo:GM}}.

\subsection{Numerical results}
In this section, we illustrate the behavior of algorithms previously introduced on different test cases. 
We consider the simple diffusion equation
\begin{equation}
\begin{array}{rcll}
- \triangle u(x) &=& g(x), ~~~ & x \in \calD, \\
u(x) &=& f(x), ~~~ & x \in \partial \calD, \\
\end{array}
\label{eq:testcase}
\end{equation}
were $\calD = ]-1,1[^d$. The source terms and boundary conditions will be specified later for each test case. \\ 
\blue{The stochastic differential equation associated to \eqref{eq:testcase} is the following
\begin{equation}
d X^x_t = \sqrt{2} d W_t, \quad X^x_0 = x, 
\end{equation}
where $(W_t)_{t \ge 0}$ is a $d$-dimensional Brownian motion.} \\
We use tensorized grids of magic points for the selection of interpolation points evolved in adaptive algorithms. \\ 

{\it Small dimensional test case.} We consider a first test case \eqref{TC1} in dimension $d= 5$. Here the source term and the boundary conditions in problem \eqref{eq:testcase} are chosen such that the solution is given by
\begin{equation}
\tag{TC1}
u(x) =x_1^2+sin(x_2)+exp(x_3)+sin(x_4) (x_5+1), \qquad x \in \overline\calD.
\label{TC1}
\end{equation}
\vspace{-0.5cm}
\begin{figure}[H]
\centering 
\includegraphics[scale=0.6]{./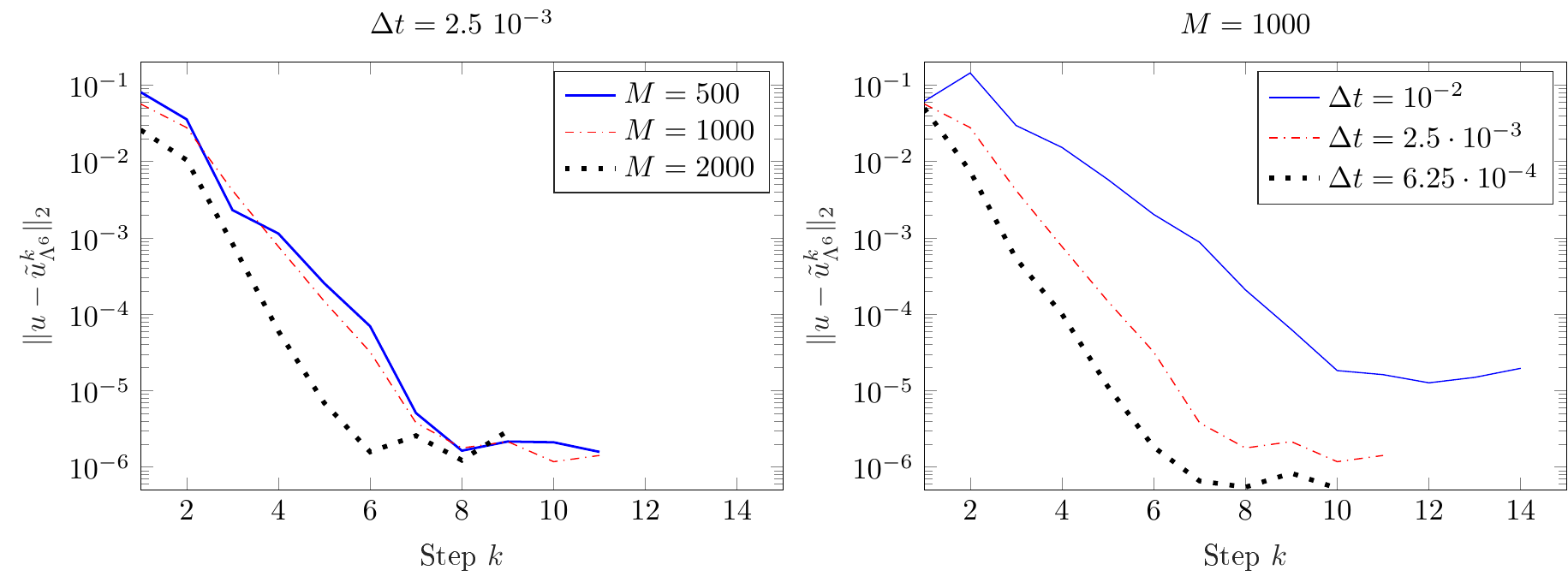}
\caption{(TC1) Algorithm \ref{algo:GM} for fixed $\Lambda$ : evolution of $\|u-\tilde u^k_{\Lambda^6}\|$ with respect to $k$ for \blue{various} $M$ (left figure), and \blue{various} $\Delta t$ (right figure). }
\label{fig:nonadaptiveMvary}
\end{figure}

We first test the influence of $\Delta t$ and $M$ on the convergence of Algorithm \ref{algo:GM} when $\Lambda$ is fixed. 
In that case, $\Lambda$ is selected a priori with Algorithm \ref{algo:unperturbed} using samples of the exact solution $u$ for (TC1), stopped for  $\varepsilon \in \{10^{-6},10^{-8},10^{-10}\}$. In what follows, the notation $\Lambda^i$ stands for the set obtained for $\varepsilon = 10^{-i}, i\in \{6,8,10\}$. 
We represent on Figure \ref{fig:nonadaptiveMvary} the evolution of the absolute error in $L^2$-norm (similar results hold for the $L^\infty$-norm) between the approximation and the true solution with respect to step $k$ for $\Lambda =\Lambda^6$. 
As claimed in Corollary 1, we recover the geometric convergence up to a threshold value that depends on $\Delta t$. We also notice faster convergence as $M$ increases and when $\Delta t$ decreases. We fix $M=1000$ in the next simulations. 
\begin{figure}[H]
\centering
\includegraphics[scale=0.6]{./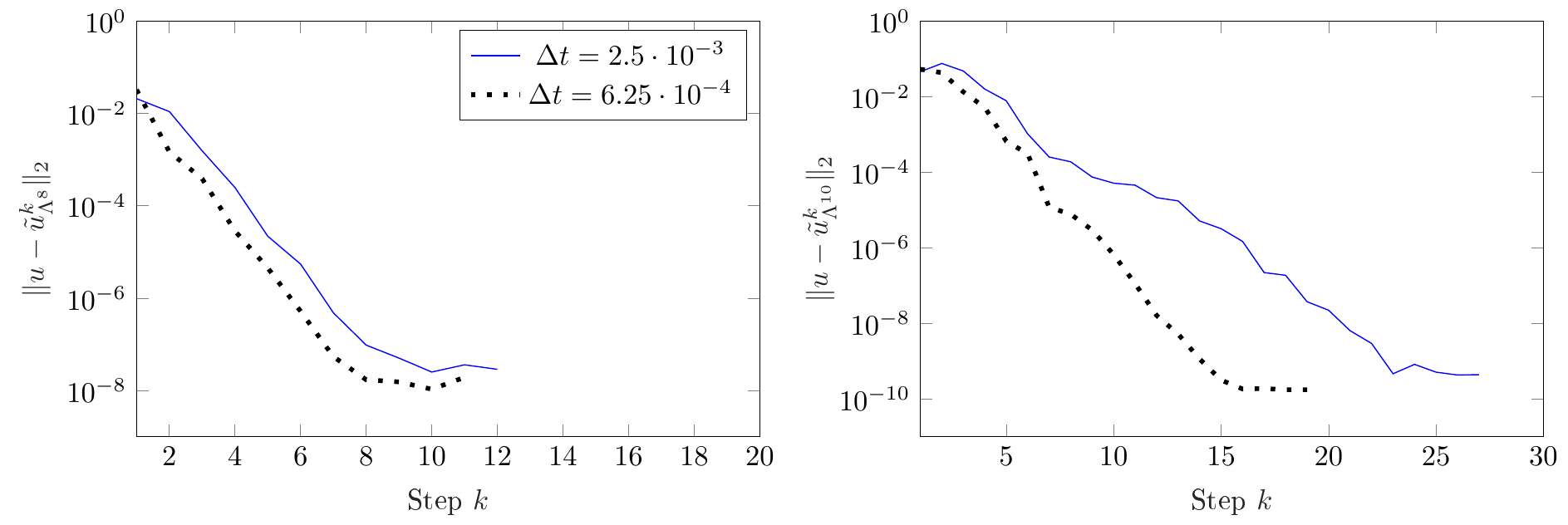}
\caption{(TC1) Algorithm \ref{algo:GM} for fixed $\Lambda^i$
: evolution of $\|u- \tilde u^k_{\Lambda^i}\|_2$ with respect to $k$ for $i=8$ (left figure), and $i=10$ (right figure).}
\label{fig:nonadaptivedeltavary}
\end{figure}
We study the impact of the choice of $\Lambda^i$ on the convergence of Algorithm \ref{algo:GM}.  
Again we observe on Figure \ref{fig:nonadaptivedeltavary} that the convergence rate gets better as  $\Delta t$ decreases. Moreover as $\# \Lambda$ increases the threshold value decreases. This is justified by the fact that interpolation error decreases as $\#\Lambda^i$ increases (see Table \ref{tab:exactalgo2}). Nevertheless, we observe that it may also deteriorate the convergence rate if it is chosen too large together with $\Delta t$ not sufficiently small. Indeed for the same number of iterations $k=10$ and the same time-step $\Delta t = 2.5 \cdot 10^{-3}$, we have an \blue{approximate} absolute error equal to $10^{-\red{7}}$ for $\Lambda^{8}$ against $10^{-4}$ for $\Lambda^{10}$.\\
\begin{table}[H]
\centering
\red{
\begin{tabular}{|c|c|c|c|c|}
\hline
$\Lambda_n$ & \textcolor{black}{$\# \Lambda_n$} & \textcolor{black}{$\varepsilon^n$} & \textcolor{black}{$||u - u_n||_2$} & \textcolor{black}{$||u - u_n||_\infty$} \\
\hline
\hline
~ & 1 &  6.183372e-01 & 1.261601e+00 &  4.213566e+00 \\
~ & 10 &  2.792486e-02 &  1.204421e-01 &  3.602629e-01\\
~ & 20 &  2.178450e-05 &  9.394419e-04 &  3.393999e-03\\
$\Lambda^6$ & 26 &  9.632815e-07 &  4.270457e-06 &  1.585129e-05\\
~ & 30 &  9.699704e-08 &  2.447475e-06 &  8.316435e-06\\
$\Lambda^8$ & 33 &  4.114730e-09 &  2.189518e-08 &  9.880306e-08\\
~ & 40 &  1.936050e-10 &  6.135776e-10 &  1.739848e-09\\
$\Lambda^{10}$ & 41 &  1.008412e-11 &  9.535433e-11 &  4.781375e-10\\
~ & 50 &  1.900248e-14 &  1.004230e-13 &  4.223288e-13\\ 
~ & 55 &  7.453467e-15 &  2.905404e-14 &  1.254552e-13\\
\hline
\end{tabular}
}
\caption{Algorithm \ref{algo:unperturbed} computed on the exact solution of \eqref{TC1}: evolution of $\#\Lambda_n$, error criterion $\varepsilon^n$ and interpolation errors in norms $L^2$ and $L^\infty$ at each step $n$.}
\label{tab:exactalgo2}
\end{table}
We present now the behavior of Algorithm \ref{algo:perturbed}. Simulations are performed \red{with a bulk-chasing parameter} $\theta = 0.5$. At each step $n$ of Algorithm \ref{algo:perturbed}, we use \blue{Algorithm} \ref{algo:GM} with $(\Delta t,M) = (10^{-4},1000)$, \red{stopped when a stagnation is detected}. As shown on the left plot of Figure \ref{fig:perturbed_GM}, for $\#\Lambda_n = 5\red{5}$ we reach approximately a precision of $10^{-14}$ as for \blue{Algorithm} \ref{algo:unperturbed} performed on the exact solution (see Table \ref{tab:exactalgo2}). According to the right plot of  Figure \ref{fig:perturbed_GM}, we also observe that the enrichment procedure behaves similarly  for both algorithms ($\tilde \varepsilon^n$ and $\varepsilon^n$ are almost the same). Here using the approximation provided by Algorithm \ref{algo:GM} has a low impact on the behavior of Algorithm \ref{algo:unperturbed}. \\
\begin{figure}[H]
\centering
\includegraphics[scale=0.6]{./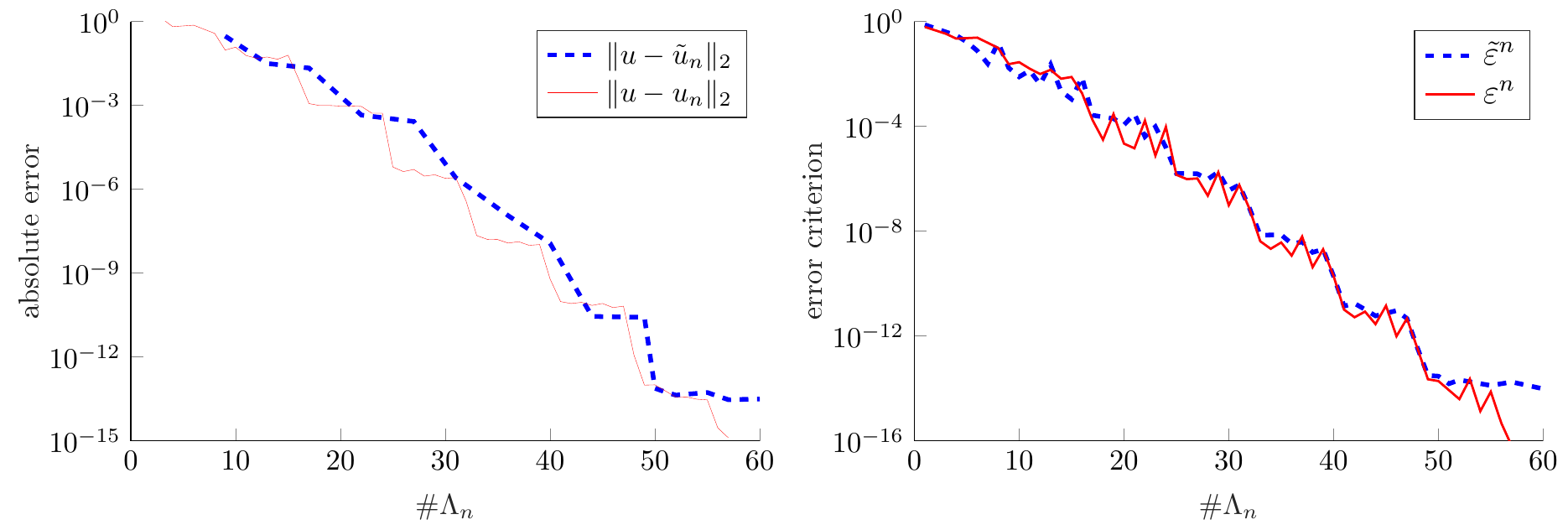}
\caption{(TC1) Comparison of Algorithm \ref{algo:unperturbed} applied to exact solution and Algorithm \ref{algo:perturbed} : (left) absolute error in $L^2$-norm (right) evolution of $\varepsilon^n$ and $\tilde \varepsilon^n$ with respect to $\#\Lambda_n$.}
\label{fig:perturbed_GM}
\end{figure}
We present then results provided with the adaptive Algorithm \ref{algo:GM}. The parameters chosen for the adaptive interpolation are $\varepsilon=5 \cdot 10^{-2}$, $\theta = 0.5$. \red{$K=30$ ensures the stopping of} Algorithm \ref{algo:GM}. As illustrated by Figure \ref{fig:adaptive_GM}, we recover globally the same behavior as for Algorithm \ref{algo:GM} without adaptive interpolation. Indeed as $k$ increases, both absolute errors in $L^2$-norm and $L^\infty$-norm decrease and then stagnate. Again, we notice the influence of $\Delta t$ on the stagnation level. Nevertheless, the convergence rates are deteriorated and the algorithm provides less accurate approximations than Algorithm \ref{algo:perturbed}. This might be due to the sparse adaptive interpolation procedure, which uses here pointwise evaluations based on Monte-Carlo estimates, unlike Algorithm \ref{algo:perturbed} which relies on pointwise evaluations resulting from Algorithm \ref{algo:GM} stopping for a given tolerance.
\begin{figure}[H]
\centering 
\includegraphics[scale=0.6]{./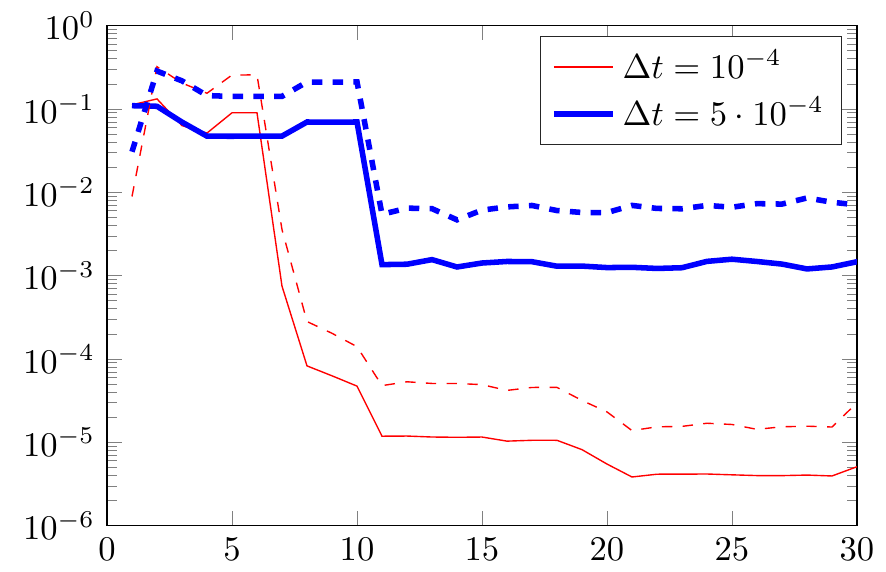}
\caption{(TC1) Adaptive Algorithm \ref{algo:GM}: evolution of  $\|u-u^k_{\Lambda_k}\|_2$ (continuous line) and $\|u-u^k_{\Lambda_k}\|_\infty$ (dashed line) with respect to step $k$ and $\Delta t$.} 
\label{fig:adaptive_GM}
\end{figure}
Finally in Table \ref{tab:compcost}, we compare the algorithmic complexity of these algorithms to reach a precision of $3 \cdot 10^{-5}$ for $(\Delta t,M)=(10^{-4},1000)$. For adaptive Algorithm \ref{algo:GM}, $\Lambda_k$ refers to the set of multi-indices considered at step $k$ of Algorithm \ref{algo:GM}. For Algorithm \ref{algo:perturbed}, $N_n$ stands for the number of iteration required by Algorithm \ref{algo:GM} to reach tolerance $\varepsilon_{tol}$ at step $n$. Finally, Algorithm \ref{algo:GM} is run with full-grid $\Lambda=\Lambda_{max}$ where  $\Lambda_{max} = \{ \nu \in \bbN^d ~:~ \nu_i \leq 10 \}$ is the set of multi-indices allowing to reach the machine precision. In this case, $N$ stands  for the number of steps for this algorithm to converge.
\begin{table}[H]
\centering
\begin{tabular}{|c|c|c|c|c|c|}
\hline
~ & Adaptive Algorithm \ref{algo:GM} & Algorithm \ref{algo:perturbed} & Full-grid Algorithm \ref{algo:GM} \\
\hline
\hline
Th. Complexity & $M  (\Delta t)^{-1} (\sum_k \# \Lambda_k) $ & $M (\Delta t)^{-1} (\sum_n \# \Lambda_n N_n)$ & $M (\Delta t)^{-1}  \# \Lambda_{max} N$ \\
\hline
Est. Complexity & $\red{4} \cdot 10^9$ operations  & $16 \cdot 10^9$ operations & $10^{12} N$ operations \\
\hline
\end{tabular}
\caption{(TC1) Comparison of the algorithmic complexity to reach the precision $3 \cdot 10^{-5}$, with $(\Delta t,M)=(10^{-4},1000)$.}
\label{tab:compcost}
\end{table}
We observe that both the adaptive version of Algorithm \ref{algo:GM} and Algorithm \ref{algo:perturbed} have a similar complexity, which is better than for the full-grid version of Algorithm \ref{algo:GM}. Moreover, we observed that while adaptive version of Algorithm \ref{algo:GM} stagnates at a precision of $3 \cdot 10^{-5}$, Algorithm \ref{algo:perturbed}, with the same parameters $\Delta t$ and $M$, converges almost up to the machine precision. This is why the \blue{high-dimensional} test cases will be run only with Algorithm \ref{algo:perturbed}. \\

{\it Higher-dimensional test cases.} Now, we consider two other test cases noted respectively (TC2) and (TC3) in higher dimension. 
\begin{enumerate}
\item[(TC2)] As second test case in dimension $d=10$,  we define \eqref{eq:testcase} such that its solution is the Henon-Heiles potential $$u(x) = \dfrac{1}{2} \sum_{i=1}^d x_i^2 +0.2 \sum_{i=1}^{d-1} \left( x_i x_{i+1}^2 - x_i^3 \right) + 2.5~10^{-3} \sum_{i=1}^{d-1} \left( x_i^2 + x^2_{i+1} \right)^2, \qquad x \in \overline\calD.$$ 
We set $(\Delta t, M) =(\textcolor{red}{10^{-4}},1000)$ and $K = 30$ for Algorithm \ref{algo:GM}.
\item[(TC3)]
We also consider the problem \eqref{eq:testcase} whose exact solution is a sum of non-polynomial functions, like \eqref{TC1} but now in dimension $d=20$, given by
\begin{equation*}
\begin{split}
u(x) = x_1^2+sin(x_{12})+exp(x_5)+sin(x_{15})(x_8+1).
\end{split}
\end{equation*}
Here, the Monte-Carlo simulations are performed for $(\Delta t, M) = (10^{-4},1000)$ and $K=30$.
\end{enumerate}
Since for both test cases the exact solution is known, we propose to compare the behavior of Algorithm \ref{algo:perturbed}  and  Algorithm \ref{algo:unperturbed}. Again, the approximations $\tilde u_n$, at each step $n$ of Algorithm \ref{algo:perturbed}, are provided by Algorithm \ref{algo:GM} stopped \red{when a stagnation is detected}. 
\textcolor{red}{In both cases, the parameters for Algorithm \ref{algo:perturbed} are set to $\theta = 0.5$ and
$\varepsilon = 10^{-15}$.}\\

In Table \ref{tab:tc2dim10}  and Table \ref{tab:tc3dim20}, we summarize the results associated to the exact and perturbed sparse adaptive algorithms for (TC2) and (TC3) respectively.  We observe that Algorithm \ref{algo:perturbed} performs well in comparison to Algorithm \ref{algo:unperturbed}, for (TC2). Indeed, we get an approximation \red{with a precision below the prescribed value $\varepsilon$ for both algorithms. }
\begin{table}[H]
\centering
\begin{tabular}{|c|c|c|c||c|c|c|c|}
\hline
     $\# \Lambda_n$ & $\varepsilon^n$ & $\| u - u_n\|_\infty $ & $\| u - u_n\|_2 $ & $\#  \Lambda_n$ &  $\tilde \varepsilon^n$  & $\| u - \tilde u_{\Lambda_n}\|_\infty $ & $\| u - \tilde u_{\Lambda_n}\|_2 $ \\
     \hline \hline
      1 & 4.0523e-01 & 3.0151e+00 & 1.2094e+00 & 1 & 3.9118e-01 & 8.3958e-01 & 6.9168e-01 \\
      17 & 1.6243e-01 & 1.8876e+00 &  5.9579e-01 & 17 & 1.6259e-01 & 5.2498e-01 & 3.4420e-01 \\
      36 & 5.4494e-02 & 7.0219e-01 & 2.0016e-01 & 36 & 5.4699e-02 & 1.9209e-01 & 1.2594e-01 \\
      46 & 1.2767e-02 & 1.6715e-01 & 4.9736e-02 & 46 & 1.2806e-02 & 4.6904e-02 & 2.8524e-02 \\
      53 & 9.6987e-04 & 2.9343e-02 & 4.8820e-03 & 53 & 1.0350e-03 & 7.8754e-03 & 2.8960e-03  \\
      60 & 7.6753e-04 & 1.5475e-02 & 4.1979e-03 & 61 & 7.0354e-04 & 3.0365e-03 & 1.7610e-03  \\
      71 & 3.2532e-04 & 8.4575e-03 & 2.1450e-03 & 71 & 3.1998e-04 & 2.3486e-03 & 1.2395e-03  \\
      \textcolor{red}{77} & \textcolor{red}{1.7434e-16} & \textcolor{red}{3.9968e-15} & \textcolor{red}{1.5784e-15} & \textcolor{red}{77} & \textcolor{red}{7.3621e-16} & \textcolor{red}{6.2172e-15} & \textcolor{red}{1.2874e-15}  \\
      \hline
\end{tabular}
\caption{(TC2) Comparison of Algorithm \ref{algo:unperturbed} (first four columns) and Algorithm \ref{algo:perturbed} (last four columns).}
\label{tab:tc2dim10}
\end{table}
\red{Similar observation holds for (TC3) in Table \ref{tab:tc3dim20} and this despite the fact that the test case involves higher dimensional problem.} 
\begin{table}[H]
\centering
\begin{tabular}{|c|c|c|c||c|c|c|c|}
\hline
$\# \Lambda_n$ & $\varepsilon^n$ & $\| u - u_n\|_\infty $ & $\| u - u_n\|_2 $ & $\# \Lambda_n$ &  $\tilde \varepsilon^n$  & $\| u -\tilde u_{\Lambda_n}\|_\infty $ & $\| u -\tilde u_{\Lambda_n}\|_2 $ \\
\hline \hline
1 & 7.0155e-01 & 3.9361e+00 & 1.2194e+00 & 1 & 5.5582e-01 & 7.2832e-01 & 7.0771e-01 \\
6 & 1.4749e-01 & 2.2705e+00 & 5.4886e-01 & 6 & 7.4253e-02 & 2.7579e-01 & 5.1539e-01   \\
11 & 2.1902e-02 & 2.8669e-01 & 1.0829e-01 & 11 & 1.4929e-02 & 4.4614e-02   & 4.1973e-02  \\
15 & 7.6086e-03 & 1.6425e-01 & 4.7394e-02 & 15 & 1.2916e-02 & 1.5567e-02 & 2.5650e-02  \\
20 & 2.2275e-04 & 2.7715e-03 & 7.2230e-04 & 20 & 3.4446e-04 & 5.6927e-04  & 5.3597e-04 \\
24 & 1.4581e-05 & 1.5564e-04 & 7.5314e-05 & 24 & 1.6036e-05 & 2.5952e-05  & 3.0835e-05  \\
30 & 1.8263e-06 & 8.0838e-06 & 2.1924e-06 & 30 & 9.0141e-07 & 2.8808e-06 & 1.9451e-06 \\
35 & 3.9219e-09 & 8.9815e-08 & 2.4651e-08 & 35 & 8.1962e-09 & 2.1927e-08 & 1.5127e-08 \\
40 & 1.7933e-10 & 2.0152e-09 & 6.9097e-10 & 40 & 1.6755e-10 & 2.8455e-10 & 2.6952e-10 \\
45 & 5.0775e-12 & 2.4783e-10 & 4.1600e-11 & 45 & 1.4627e-11 & 3.3188e-11 & 1.7911e-11 \\
49 & 1.7722e-14 & 4.6274e-13 & 8.5980e-14 & 49 & 1.7938e-14 & 8.6362e-14 & 5.0992e-14 \\
54 & 3.9609e-15 & 2.2681e-13 & 3.1952e-14 & 54 & 3.2195e-15 & 4.8142e-14 & 2.6617e-14 \\
56 & 4.5746e-16 & 8.4376e-15 & 3.0438e-15 & 56 & 8.2539e-16 & 8.4376e-15 & 6.3039e-15 \\
 \hline
\end{tabular}
\caption{(TC3) Comparison of Algorithm \ref{algo:unperturbed} (first four columns) and Algorithm \ref{algo:perturbed} (last four columns).}
\label{tab:tc3dim20}
\end{table}

\section{Conclusion}
In this paper we have introduced a probabilistic approach to approximate the solution of high-dimensional elliptic PDEs. This approach relies on adaptive sparse polynomial interpolation using pointwise evaluations of the solution estimated using a Monte-Carlo method with control variates. \\
Especially, we have proposed and compared different algorithms. First we proposed Algorithm \ref{algo:GM} which combines the sequential algorithm proposed in \cite{GM2004} and sparse interpolation. For the non-adaptive version of this algorithm we recover the convergence up to a threshold as  the original sequential algorithm \cite{GM2005}. Nevertheless it remains limited to small-dimensional test cases, since its algorithmic complexity remains high. Hence, for practical use,  the adaptive Algorithm \ref{algo:GM} should be \blue{preferred}. Adaptive Algorithm \ref{algo:GM}  converges but it does not allow to reach low precision with reasonable number of Monte-Carlo samples or time-step in the Euler-Maruyama scheme. 
Secondly, we proposed Algorithm  \ref{algo:perturbed}. It is a perturbed sparse adaptive interpolation algorithm relying on inexact pointwise evaluations of the function to approximate. Numerical experiments have shown that the perturbed algorithm (Algorithm  \ref{algo:perturbed}) performs well in comparison to the ideal one (Algorithm  \ref{algo:unperturbed}) and better than the adapted Algorithm \ref{algo:GM} with a similar algorithmic complexity. 
Here, since only heuristic tools have been provided to justify the convergence of this algorithm, the proof of its convergence, under assumptions on the class of functions to be approximated, should be addressed in a future work. 


\begin{thebibliography}{99.}%

\bibitem{BSU16} 
M.~Bachmayr, R.~Schneider \& A.~ Uschmajew. 
\newblock Tensor Networks and Hierarchical Tensors for the Solution of High-Dimensional Partial Differential Equations.
\newblock{\em Found Comput Math.} 2016, vol. 16, no 6, p. 1423-1472.

\bibitem{BMNP04}
M.~Barrault, Y.~Maday, N.C.~Nguyen \& A.T.~Patera.
\newblock An "empirical interpolation" method : application to efficient reduced-basis discretization of partial differential equations, 
\newblock{\em Comptes Rendus Math\'ematique}, 2004, vol. 339, no 9, p. 667-672.

\bibitem{Beck17}  
C.~Beck, E.~Weinan \& A.~Jentzen. 
\newblock Machine learning approximation algorithms for high-dimensional fully nonlinear partial differential equations and second-order backward stochastic differential equations.
\newblock {\em arXiv preprint arXiv:1709.05963}, 2017.

\bibitem{Beck18} 
C.~Beck, S.~Becker, P.~Grohs, N.~Jaafari \& A.~Jentzen.
\newblock Solving stochastic differential equations and Kolmogorov equations by means of deep learning.
\newblock {\em arXiv preprint arXiv:1806.00421}, 2018.

\bibitem{BG04}
H.-J.~Bungartz \& M.~Griebel.
\newblock Sparse grids. 
\newblock{\em Acta numerica.} 2004, vol. 13, p. 147-269.
 
\bibitem{Chkifa2013}
A.~Chkifa, A.~Cohen \& R.~ DeVore.
\newblock Sparse adaptive Taylor approximation algorithms for parametric and stochastic elliptic PDEs.
\newblock {\em ESAIM: Mathematical Modelling and Numerical Analysis}, 2013, vol. 47, no 1, p. 253-280.

\bibitem{Chkifa2014} 
A.~Chkifa, A.~Cohen \& C.~Schwab.
\newblock High-dimensional adaptive sparse polynomial interpolation and applications to parametric PDEs.
\newblock {\em Found. Comput. Math.}, 2014, vol. 14 pp. 601--633. 

\bibitem{CohenDahmen} 
A.~Cohen, W.~Dahmen \& R.~DeVore.
\newblock Adaptive wavelet methods for elliptic operator equations: convergence rates. 
\newblock {\em Mathematics of Computation}, 2001, vol. 70, no 233, p. 27-75.

\bibitem{CohenDeVore}
A.~Cohen \& R.~DeVore.
\newblock Approximation of high-dimensional parametric PDEs. 
\newblock {\em Acta Numerica}, 2015, vol. 24, p. 1-159.

\bibitem{CohenMigliorati}
A.~Cohen \& G.~Migliorati. 
\newblock Multivariate approximation in downward closed polynomial spaces. 
\newblock { \em Contemporary Computational Mathematics-A Celebration of the 80th Birthday of Ian Sloan.} Springer, Cham, 2018. p. 233-282.

\bibitem{CM2015}
F.~Comets \& T.~Meyre.
\newblock {\em Calcul stochastique et mod\`eles de diffusions-2\`eme \'ed. Dunod}, 2015.

\bibitem{Friedman1975}
A.~Friedman.
\newblock {\em Stochastic differential equations and applications}.
\newblock Academic Press, New York, 1975.


\bibitem{Giles2012}
M.~B.\ Giles.
\newblock Multilevel monte carlo methods. 
\newblock { \em Monte Carlo and Quasi-Monte Carlo Methods 2012}. Springer, Berlin, Heidelberg, 2013. p. 83-103.

\bibitem{GM2004}
E.~Gobet \& S.~Maire.
\newblock A spectral Monte Carlo method for the Poisson equation.
\newblock {\em  Monte Carlo Methods and Applications mcma}, 2004, vol. 10, no 3-4, p. 275-285.

\bibitem{GM2005}
E.~Gobet \& S.~Maire.
\newblock Sequential control variates for functionals of Markov processes.
\newblock {\em SIAM Journal on Numerical Analysis}, 2005, vol. 43, no 3, p. 1256-1275.

\bibitem{GMe2009}
E.~Gobet \& S.~Menozzi. 
\newblock Stopped diffusion processes: boundary corrections and overshoot.
\newblock  { \em Stochastic Processes and Their Applications}, 2010, vol. 120, no 2, p. 130-162.

\bibitem{G2016}
E.~Gobet.
\newblock { \em Monte-Carlo methods and stochastic processes: from linear to non-linear}. Chapman and Hall/CRC, 2016.

\bibitem{GKT13}
L.~Grasedyck, D.~Kressner \& C.~Tobler.
\newblock A literature survey of low- rank tensor approximation techniques.
\newblock {\em GAMM-Mitteilungen.}, 2013, vol. 36, no 1, p. 53-78.

\bibitem{Hack14}
W.~Hackbusch.
\newblock Numerical tensor calculus,
\newblock {\em Acta numerica}, 2014, vol. 23, p. 651-742.
\blue{\bibitem{KP2012}
P.~Kloeden \& E.~Platen.
\newblock Numerical solution of stochastic differential equations.
\newblock { \em Springer Science \& Business Media }, 2013.}

\bibitem{MadayMagic}
Y.~Maday, N.C.~Nguyen, A.T.~Patera, et al.
\newblock  A general, multipurpose interpolation procedure: the magic points. 
\newblock{\em Communications on Pure and Applied Analysis}, 2009, vol.8 no1 p. 383-404.

\bibitem{N3T16} 
F.~Nobile, L.~Tamellini, F.~Tesei \& R.~Tempone. 
\newblock An Adaptive Sparse Grid Algorithm for Elliptic PDEs with Lognormal Diffusion Coefficient; Sparse Grids and Applications.
\newblock{\em Sparse Grids and Applications-Stuttgart 2014. Springer, Cham} 2016, p. 191-220.

\bibitem{Nou17} 
A.~Nouy.
\newblock Low-Rank Methods for High-Dimensional Approximation and Model Order Reduction.
\newblock {\em Model Reduction and Approximation, Chapter 4}. 2017.




\bibitem{Osel11}
I.~Oseledets.
\newblock Tensor-train decomposition.
\newblock {\em SIAM J. Sci. Comput.} 2011, vol. 33, no 5, p. 2295-2317.

\bibitem{SG11} 
C.~Schwab \& C.~J.~Gittelson.
\newblock Sparse tensor discretizations of high-dimensional parametric and stochastic PDEs.
\newblock {\em Acta Numerica.}, 2011, vol. 20, p. 291-467.

\bibitem{SY10}
J.~Shen \& H.~Yu.
\newblock Efficient Spectral Sparse Grid Methods and Applications to High Dimensional Elliptic Problems.
\newblock{\em SIAM J. Sci. Comput.} 2010, vol. 32, no 6, p. 3228-3250.

\bibitem{WJJ17} 
E.~Weinan, H.~Jiequn \& A.~Jentzen.
\newblock Deep learning-based numerical methods for high-dimensional parabolic partial differential equations and backward stochastic differential equations.
\newblock {\em Communications in Mathematics and Statistics}, 2017, vol. 5, no 4, p. 349-380.

\end{thebibliography}
\end{document}